\newcolumntype{R}[1]{>{\raggedleft\arraybackslash }b{#1}}
\newcolumntype{L}[1]{>{\raggedright\arraybackslash }b{#1}}
\newcolumntype{C}[1]{>{\centering\arraybackslash }b{#1}}
\newcounter{minutes}\setcounter{minutes}{\time}
\newcounter{hours}\setcounter{hours}{\time}
\newtheorem{theorem}{Theorem}
\newtheorem{lemma}{Lemma}
\newtheorem{corollary}{Corollary}
\numberwithin{equation}{section}
\title[Subordination Involving Gauss Hypergeometric Function]{Subordination Involving Gauss Hypergeometric Function}
\author[A. Kumar and S. Das]
{Anish Kumar and Sourav Das}
\address{{\bf Anish Kumar}\newline
Department of Mathematics,
National Institute of
Technology Jamshedpur,\newline
Jamshedpur 831014, Jharkhand, India}
\email{ak8107690@gmail.com}
\address{{\bf Sourav Das}\newline
Department of Mathematics,
National Institute
of Technology Jamshedpur,\newline
Jamshedpur 831014, Jharkhand, India}
\email{souravdasmath@gmail.com, souravdas.math@nitjsr.ac.in}
\keywords{Fox-Wright functions, Analytic functions, Univalent functions, Convex functions, Starlike functions,  Hardy spaces}
\subjclass[2020]{30D15; 30C45; 30H10}
\begin{document}

\begin{abstract}
The primary objective of this work is to obtain some sufficient conditions so that normalized Gauss hypergeometric function satisfies exponential starlikeness and convexity in the unit disk. Moreover, conditions on parameter of this function has been derived for being Janowski convexity and starlikeness with the help of differential subprdination.  Results established in this work are presumably new and their significance is illustrated by several consequences.
\end{abstract}
\maketitle

\section{Introduction and Motivation}
In the last few years, generalized hypergeometric functions have sparked great interest among researchers. Mainly used by L. de. Branges \cite{debrang} in 1985 into the proof of Melin conjecture which promted into the Bieberbach conjecture. A few year ago to this proof only some literature deals with the univalent function theory for different kind of Gauss hypergeometric function, confluent hypergeometric function, other generalized hypergeometric function like Bessel function, Struve function, Lommel function using various methods. In \cite{sim, yagmur, orhan, orhan1, owa, noreen, Ali, baricz}, researchers derived several require conditions on the parameters for these special functions lies on geometric characteristics such as convexity, starlikeness, close-to-convexity and uniformly convex. Exponential starlikeness, convexity and also janowski starlikeness and convexity have been established for confluent hypergeometric function  in  \cite{Ali, Naz}. Mocanu and Miller investigated one of the probaly earliest manuscript to demonstrate specific characteristics such as convexity and starlikeness for this functions in 1990 \cite{ssm}. Carath\'eodory result are shown for Gaussian hypergeometric function by using methodology of differential subordination theory \cite{oros}. In this article, various sufficient conditions have been obtained on the involved parameters in Gauss hypergeometric function $F$ which satisfies geometric properties such as exponential starlikeness and convexity, Janowski starlikeness and convexity employing differential subordination theory.

Let $\mathcal{A}$ denote the normalized collection of analytic functions, which satisfy the condition $F^{\prime}(0)-1=F(0)=0$. A function $F\in \mathcal{A}$ which is univalent in the unit disk such that $U=\frac{zF^{\prime}}{F}$ or   $U=1+\frac{zF^{\prime\prime}}{F^{\prime}}$ lies in the domain $|\log U|<1$ of the right half plane connected with the exponential function. Subordination play an important character in defining these geometric properties. If $g$ and $h$ are two analytic function in $\mathbb{D}=\{z\in \mathbb{C}:|z|<1\}$, then $g$ is subordinate to $h$ if there exists a Schwartz function $l$ in $\mathbb{D}$ such that $g=h o l$ in $\mathbb{D}$. It can be denoted also as $g\prec h$ if $g(0)=h(0)$ and $g(D)\subseteq h(D)$. The Class $\mathcal{P}_{e}$ contains analytic function $p$  in $\mathbb{D}$ with $p(z) \prec e^{z}$ and $p(0)=1$ for every $z \in \mathbb{D}$. A function $F\in \mathcal{A}$ is called exponential convex (or starlike) if $1+\frac{zF^{\prime\prime}(z)}{F^{\prime}(z)}$ or $\frac{zF^{\prime(z)}}{F(z)}$ belong to class $\mathcal{P}_{e}$. It is denoted as $\mathcal{K}_{e}$ and $\mathcal{S}_{e}$. Ma and Minda \cite{Maminda} introduced particular cases of subclass of starlike $S^{*}$ and convex function $\mathcal{K}^{*}$. Mathematical charecterization of  $S^{*}$ and $\mathcal{K}^{*}$ is defined as follows:
$$\mathcal{S}^*=\left\{F\in \mathcal{A}:  \Re\left(\frac{zF^\prime(z)}{F(z)}\right)>0 \quad
\mbox{for all} \; z\in\mathbb{D}\right\},$$
$$\mathcal{K}=\left\{F\in \mathcal{A}:  \Re\left(1+\frac{zF^{\prime\prime}(z)}{F^{\prime}(z)}\right)>0 \quad
\forall \; z\in\mathbb{D}\right\}.$$ 

A function $F\in \mathcal{A}$ is lemniscate convex if $1+\frac{zF^{\prime\prime}(z)}{F^{\prime}(z)}$ lie in bounded region by right half of lemniscate of Bernouli $\{k:|k^{2}-1=1\}$. In terms of subordination lemniscate starlike and lemniscate convexity is defined as $\frac{zF^\prime(z)}{F(z)}\prec \sqrt{1+z}$ and $1+\frac{zF^{\prime\prime}(z)}{F(z)}\prec \sqrt{1+z}$ respectively. For $-1\leq D<C\leq 1$, suppose that $P[C,D]$ be the class containing normalized analytic function $p(z)=1+c_{1}(z)+\cdots $ in $\mathbb{D}$ satisfying $p(z)\prec \frac{1+Cz}{1+Dz}$. For $0\leq \beta<1$, then $P[1-2\beta,-1]$ is the class of function $1+c_{1}(z)+\cdots$ holds $\Re(p(z))>\beta$ in $\mathbb{D}$. The class $S^{*}[C,D]$ of Janowski Starlike function \cite{janowski} and convexity if $\mathbb{F}\in \mathcal{A}$ satisfy $\frac{z\mathbb{F}^{\prime}(z)}{F(z)}\in P[C,D]$ and $1+\frac{z\mathbb{F}^{\prime\prime}(z)}{\mathbb{F}(z)}\in P[C,D]$ respectively.

Assume $w\neq 0,-1,-2,\cdots$ then function $$F(u,v;w;z)=\sum_{n=0}^{\infty}\frac{(u)_{n}(v)_{n}}{(w)_{n}}\frac{z^{n}}{n!}$$ is said to be Gauss hypergeometric function is analytic in $\mathbb{D}$ and satisfy the differential equations $z(1-z)F^{\prime\prime}+[w-(u+v+1)z]F^{\prime}-uvF=0$. $F$ holds some recurrence relation \cite{Miller} as:
\begin{align*}
    &F(u,v;w;z)=F(v,u;w;z),\\
    &wF^{\prime}(u,v;w;z)=uvF(u+1,v+1;w+1;z),\\
    &F(u,v;w;z)=(1-z)^{w-u-v}F(w-u,w-v;w;z).
\end{align*}

  In the year 2009, the~following inequality was established by
Pog\'{a}ny and  \linebreak {Srivastava} (\cite{PS}, p. 133, Theorem 4)
\begin{equation}\label{fi}
\psi_0\; \exp\left(\frac{\psi_1}{\psi_0}\;|z|\right)
\leqq {}_p\Psi_q\left[
\begin{array}{rr}
{(a_j,A_j)_{j=1,\cdots,p}};\\
\\
{(b_j,B_j)_{j=1,\cdots,q}};
\end{array}\;z
\right]
\leqq \psi_0-\left(1-e^{|z|}\right)\psi_1
\end{equation}
for all suitably restricted $z, a_j,A_j,b_\ell,B_\ell\in \mathbb{R}
\; (j=1,\cdots,p;\; \ell=1,\cdots,q)$ and
for all ${}_p\Psi_q[z]$ satisfying
the following inequalities:
$$\psi_1>\psi_2\;\;\textrm{and}\;\;\psi_1^2<\psi_0\;\psi_2,$$
where
$$\psi_k:=\frac{\prod\limits_{j=1}^p\Gamma(a_j+A_j\; k)}
{\prod\limits_{j=1}^q\Gamma(b_j+B_j\; k)}\qquad (k=0,1,2).$$
It will be helpful to obtain many results regarding geometric characteristics in geometric function theory.

The paper is systemized as follows. In Section \ref{sec1}, we have mentioned some useful lemmas, which will be helpful to derive our key results. Section \ref{sec2} discusses some sufficient conditions so that the Gauss hypergeometric function possesses such as exponential starlikeness and convexity. Moreover, some consequences and  important remarks have been shown in this section. In Section \ref{sec3}, we have considered normalized Gauss hypergeometric function for which Janowski starlikeness and convexity have been discussed.  Consequence and remarks related to this have also been shown in this section. 

\section{Useful Lemmas}\label{sec1}
  Some Lemmas have been recollected in the below section, which will be useful to show our main results.

% \begin{lemma}\label{lem1}\rm{\cite{Naz}}
% Let $\Omega$ be a subset of $\mathbb{C}$ and the function $\psi:\mathbb{C}^{3}\times \mathbb{D}\rightarrow \mathbb{C}$ holds the condition $\psi(r,s,t:z)\notin \Omega$ whenever: $s=me^{i\theta}e^{e^{i\theta}}, r=e^{e^{i\theta}} $, $\Re((s+t)e^{-i\theta}e^{-e^{i\theta}})\geq 0$, $\theta \in [0,2\pi), z\in \mathbb{D}$ and $1\leq m$. If $q$ is an analytic function in $\mathbb{D}$ with $q(0)=1$ and $\psi(q(z),zq^{\prime}(z),z^{2}q^{\prime\prime}(z);z)\in \Omega$ for $z\in \mathbb{D}$, then $q \in \mathcal{P}_{e}$.

% It can be noted that the admissibility condition $\psi(r,s,t;z)\notin \Omega$ is verified for all : $s=me^{i\theta}e^{e^{i\theta}}$, $r=e^{e^{i\theta}},$  $\Re((s+t)e^{-i\theta}e^{-e^{i\theta}}\geq 0$, $\theta \in [0,2\pi), z\in \mathbb{D}$ and $1\leq m$.

% Furthermore for $\psi:\mathbb{C}^{2}\times \mathbb{D}\rightarrow \mathbb{C}$ the admissibility condition convert into $\psi(e^{e^{i\theta}},me^{i\theta}e^{e^{i\theta}}) \notin \Omega$, where $z\in \mathbb{D}$, $\theta\in [0,2\pi)$ and $m\geq1$.
% \end{lemma}
 \begin{lemma}\rm{\cite{Medirata}}\label{Subordination Lemma}
Let $S_{1}$ be the image set of the function $f(z)$ and $S_{2}$ be the image set of the function $e^z$ then $S_{1} \subset  S_{2}$ if $$|f(z)-1| < 1-\frac{1}{e},$$ and $f(0)=1$,
 where e is the Euler's number.
\end{lemma}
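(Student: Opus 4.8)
The plan is to reduce the lemma to one geometric containment. The hypothesis $|f(z)-1|<1-\tfrac1e$ says precisely that every value of $f$ lies in the open disk $\Delta:=\{w:|w-1|<1-\tfrac1e\}$, so $S_1=f(\mathbb{D})\subseteq\Delta$ is immediate; it therefore remains only to prove $\Delta\subseteq S_2$, where $S_2=\{e^z:z\in\mathbb{D}\}$ is the image of the exponential map. The normalization $f(0)=1=e^0$ is what guarantees $1\in S_1\cap S_2$, turning the inclusion into a genuine subordination $f\prec e^z$ rather than a vacuous statement.

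First I would note that $e^z$ is univalent on $\mathbb{D}$: the equality $e^{z_1}=e^{z_2}$ forces $z_1-z_2\in 2\pi i\mathbb{Z}$, while $|z_1-z_2|<2<2\pi$ forces $z_1=z_2$. Hence $S_2$ is a simply connected domain containing the interior point $1$, whose boundary is the analytic Jordan curve $\gamma(\theta)=e^{e^{i\theta}}$, $\theta\in[0,2\pi)$. Because $S_2$ is open and $1\in S_2$, the largest open disk centred at $1$ and contained in $S_2$ has radius exactly $\rho=\operatorname{dist}(1,\partial S_2)=\min_{\theta}|\gamma(\theta)-1|$. Thus the lemma follows once I show $\rho\ge 1-\tfrac1e$.

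The computational heart is the evaluation of this minimum. Writing $e^{i\theta}=\cos\theta+i\sin\theta$ and expanding gives
\begin{equation*}
g(\theta):=|\gamma(\theta)-1|^2=e^{2\cos\theta}-2e^{\cos\theta}\cos(\sin\theta)+1,
\end{equation*}
which is even, so I restrict to $\theta\in[0,\pi]$. At the endpoints $g(0)=(e-1)^2$ and $g(\pi)=e^{-2}-2e^{-1}+1=(1-\tfrac1e)^2$, so it suffices to show that $\theta=\pi$ is the global minimiser. I expect this to be the main obstacle: differentiating yields the factored form
\begin{equation*}
g'(\theta)=2e^{\cos\theta}\bigl[\sin\theta\bigl(\cos(\sin\theta)-e^{\cos\theta}\bigr)+\cos\theta\sin(\sin\theta)\bigr],
\end{equation*}
whose bracketed factor is a sum of two terms of opposite sign on $(0,\pi)$, so a pure sign analysis is insufficient and one must compare their magnitudes. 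After the substitution $\theta=\pi-\phi$ and the identity $\cos\phi\sin(\sin\phi)-\sin\phi\cos(\sin\phi)=-\sin(\phi-\sin\phi)$, the required monotonicity reduces to $\sin\phi\,e^{-\cos\phi}\ge\sin(\phi-\sin\phi)$ on $[0,\pi/2]$; since $\sin(\phi-\sin\phi)\le\phi-\sin\phi$, it in turn suffices to verify the elementary one-variable inequality $\sin\phi\,(1+e^{-\cos\phi})\ge\phi$ there. This shows that $g$ is strictly decreasing on $(0,\pi)$, whence $\min_{\theta}g(\theta)=g(\pi)=(1-\tfrac1e)^2$ and $\rho=1-\tfrac1e$.

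Combining the steps gives $S_1\subseteq\Delta\subseteq S_2$, which is the asserted inclusion; together with $f(0)=1$ this is exactly the subordination $f\prec e^z$ invoked later in the paper.
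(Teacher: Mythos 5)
The paper does not actually prove this lemma --- it is quoted verbatim from \cite{Medirata} --- so there is no in-paper argument to compare against; I am judging your proposal on its own. Your overall strategy is the standard one and is sound: reduce the claim to the inclusion $\{w:|w-1|<1-\tfrac1e\}\subseteq e^{\mathbb D}$, use univalence of $e^z$ on $\mathbb D$ to identify $\partial(e^{\mathbb D})$ with the curve $\gamma(\theta)=e^{e^{i\theta}}$, and compute $\min_\theta|\gamma(\theta)-1|$. Your formulas for $g(\theta)=e^{2\cos\theta}-2e^{\cos\theta}\cos(\sin\theta)+1$, for $g'$, and the endpoint values $g(0)=(e-1)^2$, $g(\pi)=(1-\tfrac1e)^2$ are all correct, as is the trigonometric identity after the substitution $\theta=\pi-\phi$.

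The gap is in the monotonicity step. The substitution $\theta=\pi-\phi$ maps $(0,\pi)$ onto $(0,\pi)$, so to conclude that $g$ decreases on all of $(0,\pi)$ you must verify $\sin\phi\,e^{-\cos\phi}\ge\sin(\phi-\sin\phi)$ for \emph{all} $\phi\in(0,\pi)$, not only on $[0,\pi/2]$ as you state. Your auxiliary inequality $\sin\phi\,(1+e^{-\cos\phi})\ge\phi$ genuinely fails on $(\pi/2,\pi)$ (as $\phi\to\pi$ the left side tends to $0$ while the right tends to $\pi$; already at $\phi=2.5$ the left side is about $1.93<2.5$), so your chain of reductions establishes $g'<0$ only for $\theta\in[\pi/2,\pi)$, and the range $\theta\in(0,\pi/2)$ --- where the two terms in the bracket of $g'$ really do have opposite signs --- is left unproven. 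The hole is easy to patch without monotonicity: writing $g(\theta)=\bigl(e^{\cos\theta}-\cos(\sin\theta)\bigr)^2+\sin^2(\sin\theta)$ and noting that for $\theta\in[0,\pi/2]$ one has $e^{\cos\theta}-\cos(\sin\theta)\ge e^{\cos\theta}-1\ge\cos\theta\ge0$ and $\sin s\ge\tfrac56 s$ for $s\in[0,1]$, one gets $g(\theta)\ge\cos^2\theta+\tfrac{25}{36}\sin^2\theta\ge\tfrac{25}{36}>\bigl(1-\tfrac1e\bigr)^2$, which together with your decrease of $g$ on $[\pi/2,\pi]$ yields $\min_\theta g=g(\pi)$. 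With that repair (and an actual verification of $\sin\phi\,(1+e^{-\cos\phi})\ge\phi$ on $[0,\pi/2]$, which you only announce), the proof is complete.
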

% \begin{proof}
% Let us consider the set $S_{2}$. Then infimum of $S_{2}$ is given by 
%  $$\inf{(S_{2})}:= \inf{\{e^{z}: \forall \quad |z| \le 1 \}}. $$
% By the minimum modulus principle ( If f is a non-constant holomorphic function on a bounded region G and continuous on $\overline{G}$, then either f has a zero in G or $|f|$ assumes its minimum value on the $\partial{G}$)
% $$\inf{(S_{2})}=\inf \{e^{z}: \forall \quad  |z| = 1\}$$
% $$\inf{(S_{2})}={e^{-1}} = \frac{1}{e}.$$
% Hence with $f(0)=1$, we obtain desired condition $$|f(z)-1| < 1-\frac{1}{e}.$$ Which completes our proof of the lemma.
% \end{proof}

\begin{lemma}\rm{\cite{Miller,mocamu}}\label{lem2}
 Suppose that $\Omega \subset \mathbb{C}$ and   $\psi:\mathbb{C}^{3}\times \mathbb{D}\rightarrow \mathbb{C}$ hold $\psi(i\rho,\sigma,\mu+iv;z)\notin {\Omega}$ when $z\in \mathbb{D}$, $\rho$ real $\sigma\leq -\frac{1+\rho^{2}}{2}$ and $\sigma+\mu\leq 0$. If $q$ is analytic in $\mathbb{D}$ with $q(0)=1$ and   $\psi(q(z),zq^{\prime}(z),z^{2}q^{\prime\prime}(z);z)\in \Omega$ for $z\in \mathbb{D}$, then $\Re({q(z)})>0$ in $\mathbb{D}$.
\end{lemma}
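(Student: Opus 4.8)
The final statement in the excerpt is Lemma~\ref{lem2}, which is cited from the literature (\cite{Miller,mocamu}). It is a standard result from the theory of differential subordinations (the admissibility criterion of Miller and Mocanu). Below I sketch how one proves it.

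\medskip

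The plan is to argue by contradiction using the behaviour of the analytic function $q$ at a boundary extremal point, together with the Jack--Miller--Mocanu lemma governing such points. First I would suppose, contrary to the conclusion, that $\Re(q(z))>0$ fails somewhere in $\mathbb{D}$. Since $q(0)=1$ has positive real part and $q$ is continuous, the set where $\Re(q)>0$ is an open neighbourhood of the origin; by an intermediate-value/compactness argument there exists a point $z_{0}\in\mathbb{D}$ at which $\Re(q(z_{0}))=0$ for the first time, i.e.\ $q(z_{0})=i\rho$ for some real $\rho$, while $\Re(q(z))>0$ on the disk $|z|<|z_{0}|$. The heart of the proof is to extract precise information about the derivatives $z_{0}q^{\prime}(z_{0})$ and $z_{0}^{2}q^{\prime\prime}(z_{0})$ at such a first boundary-touching point.

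\medskip

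The key step is to apply the Jack--Miller--Mocanu boundary lemma. Consider the Cayley-type transform that maps the right half-plane to the unit disk and compose it with $q$; the point $z_{0}$ becomes a point where the transformed function attains modulus one with all smaller moduli interior. Jack's lemma then yields $z_{0}q^{\prime}(z_{0})=\sigma\,q(z_{0})$ direction data, and after translating back through the half-plane the standard computation gives the two structural inequalities
\begin{align}
z_{0}q^{\prime}(z_{0}) &= \sigma, \qquad \sigma \le -\frac{1+\rho^{2}}{2},\\
\Re\!\left(z_{0}^{2}q^{\prime\prime}(z_{0})+z_{0}q^{\prime}(z_{0})\right) &\le 0,
\end{align}
so that writing $z_{0}^{2}q^{\prime\prime}(z_{0})=\mu+iv$ we obtain exactly $\sigma+\mu\le 0$. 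These are precisely the constraints appearing in the hypothesis on $\psi$. Thus the triple $\bigl(q(z_{0}),\,z_{0}q^{\prime}(z_{0}),\,z_{0}^{2}q^{\prime\prime}(z_{0});z_{0}\bigr)=(i\rho,\sigma,\mu+iv;z_{0})$ satisfies the admissibility conditions of the hypothesis, whence the assumption forces
\[
\psi\bigl(q(z_{0}),\,z_{0}q^{\prime}(z_{0}),\,z_{0}^{2}q^{\prime\prime}(z_{0});z_{0}\bigr)\notin\Omega.
\]

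\medskip

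This contradicts the standing assumption that $\psi(q(z),zq^{\prime}(z),z^{2}q^{\prime\prime}(z);z)\in\Omega$ for \emph{all} $z\in\mathbb{D}$, in particular at $z=z_{0}$. The contradiction shows no such first zero $z_{0}$ of $\Re(q)$ can exist, so $\Re(q(z))>0$ throughout $\mathbb{D}$, as claimed. I expect the main obstacle to be the careful justification that a genuine ``first'' boundary point exists and that Jack's lemma applies there with the stated derivative inequalities; this requires handling the possibility that $q$ is unbounded near the boundary and working instead on a slightly shrunken disk $|z|\le r<1$ and then letting $r\to 1^{-}$, extracting a limiting extremal point by compactness. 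Once the two derivative inequalities $\sigma\le-\tfrac{1+\rho^{2}}{2}$ and $\sigma+\mu\le 0$ are secured, the remainder is a direct matching against the admissibility hypothesis.
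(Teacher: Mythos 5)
The paper does not prove this lemma at all: it is quoted verbatim from the cited sources \cite{Miller,mocamu} as a known admissibility result, so there is no in-paper argument to compare against. Your sketch reproduces the standard Miller--Mocanu proof correctly in outline: assume $\Re q$ vanishes somewhere, take a point $z_{0}$ of minimal modulus with $q(z_{0})=i\rho$, invoke the boundary (Jack--Miller--Mocanu) lemma to get that $z_{0}q^{\prime}(z_{0})=\sigma$ is real with $\sigma\le-\tfrac{1+\rho^{2}}{2}$ and $\Re\bigl(z_{0}^{2}q^{\prime\prime}(z_{0})+z_{0}q^{\prime}(z_{0})\bigr)\le 0$, and then contradict $\psi(q(z_{0}),z_{0}q^{\prime}(z_{0}),z_{0}^{2}q^{\prime\prime}(z_{0});z_{0})\in\Omega$ via the admissibility hypothesis. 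The one spot that is garbled is the phrase ``$z_{0}q^{\prime}(z_{0})=\sigma\,q(z_{0})$ direction data'': Jack's lemma gives $z_{0}w^{\prime}(z_{0})=kw(z_{0})$, $k\ge 1$, for the Cayley-transformed \emph{disk}-valued function $w$, and the realness of $z_{0}q^{\prime}(z_{0})$ together with the two displayed inequalities is what one extracts after transforming back; your displayed conclusions are the right ones, so this is a presentational slip rather than a gap.
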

In this case $\psi:\mathbb{C}^{2}\times \mathbb{D}\rightarrow \mathbb{C}$, then the condition Lemma \ref{lem2} convert into 

$\psi(i\rho,\sigma;z)\notin \Omega$ for $\rho$ real and $\sigma\leq -\frac{1+\rho^{2}}{2}$.
\section{Exponential Starlikeness and Convexity of Gauss Hypergeometic fucntion}\label{sec2}
In this section, we consider the Gauss hypergeometric function and study some of its geometric properties which satisfy the criteria of exponential subordination. 
\begin{theorem}
Assume that the parameters $u,v\in \mathbb{R}$, $w\neq 0,-1,-2,\cdots$ be constrained and u,v and w also fulfill the conditions:
\begin{displaymath}
{\rm (H_1):}\left\{ \begin{array}{ll}
{\rm (i)} & 2(u+2)(v+2)<3(w+2);\\
{\rm (ii)} & \frac{(u+1)(v+1)(w+2)}{(u+2)(v+2)(w+1)}<\frac{2}{3};\\
{\rm (iii)} &\left|{\frac{
u(u+1)v(v+1)}{2(w)(w+1)}+\frac{uv}
{w(e-1)}}\right|< \frac{1}{e},
\end{array} \right.
\end{displaymath} then
$F(u,v;w;z)\in \mathcal{P}_{e}$ w.r.t domain of unit disk.
\end{theorem}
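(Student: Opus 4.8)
The plan is to reduce everything to the single inequality supplied by Lemma \ref{Subordination Lemma}. Since $F(u,v;w;0)=1$, that lemma tells us it is enough to verify $|F(u,v;w;z)-1|<1-\frac1e$ for every $z\in\mathbb D$; this at once yields $F\prec e^{z}$ and hence $F\in\mathcal P_e$. So I would first write out the power series $F(u,v;w;z)-1=\sum_{n=1}^{\infty}a_n z^{n}$ with $a_n=\frac{(u)_n(v)_n}{(w)_n\,n!}$, and apply the triangle inequality together with $|z|<1$ to obtain $|F(u,v;w;z)-1|<\sum_{n=1}^{\infty}|a_n|$. The whole problem is thereby converted into estimating this series and showing that it stays below $1-\frac1e=\frac{e-1}{e}$.

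Next I would split off the two lowest coefficients, $a_1=\frac{uv}{w}$ and $a_2=\frac{u(u+1)v(v+1)}{2w(w+1)}$, which are precisely the quantities appearing in $(\mathrm H_1)(\mathrm{iii})$, and estimate the tail $\sum_{n\ge3}|a_n|$ by controlling the consecutive ratio $t_n:=\left|\frac{a_{n+1}}{a_n}\right|=\frac{|(u+n)(v+n)|}{(w+n)(n+1)}$. Condition $(\mathrm{i})$, rewritten as $\frac{(u+2)(v+2)}{w+2}<\frac32$, is designed exactly to force $t_2<\frac12$, while condition $(\mathrm{ii})$ governs the comparison of $t_1$ with $t_2$ and hence the monotone behaviour of the ratios. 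Using these two facts I would dominate the tail by a convergent series measured against $|a_2|$, the target being the clean bound $\sum_{n\ge2}|a_n|\le (e-1)\,|a_2|$, so that $\sum_{n\ge1}|a_n|\le |a_1|+(e-1)|a_2|$.

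Finally I would invoke $(\mathrm H_1)(\mathrm{iii})$. Multiplying that inequality through by $e-1$ and rearranging turns $\left|\frac{u(u+1)v(v+1)}{2w(w+1)}+\frac{uv}{w(e-1)}\right|<\frac1e$ into $|a_1|+(e-1)|a_2|<\frac{e-1}{e}=1-\frac1e$. Combining with the estimate of the previous paragraph gives $|F(u,v;w;z)-1|<1-\frac1e$, and Lemma \ref{Subordination Lemma} then delivers $F(u,v;w;z)\in\mathcal P_e$.

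The step I expect to be the genuine obstacle is the tail estimate $\sum_{n\ge3}|a_n|$, and the reason is structural: the ratio $t_n=\frac{(u+n)(v+n)}{(w+n)(n+1)}$ tends to $1$ as $n\to\infty$, so no single geometric ratio strictly below $1$ can bound every term, and a crude comparison would converge only in the regime $w>u+v$. The hypotheses $(\mathrm{i})$ and $(\mathrm{ii})$ must therefore be used with care -- through monotonicity of $t_n$ together with the retained factors $|z|^{n}<1$ -- to trap the tail against $|a_2|$ and, more delicately, to reproduce exactly the constant $e-1$ rather than a looser one. It is this matching of the constant to $(\mathrm{iii})$, and not the mere convergence of the tail, that is the crux of the argument.
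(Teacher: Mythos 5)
Your overall skeleton matches the paper's: reduce to $|F(u,v;w;z)-1|<1-\frac1e$ via Lemma \ref{Subordination Lemma}, drop the factors $|z|^n<1$, and observe that condition $(\mathrm H_1)(\mathrm{iii})$, multiplied through by $e-1$, is exactly the statement $\bigl|\,|a_1|+(e-1)|a_2|\,\bigr|<1-\frac1e$ in disguise. You have also correctly reverse-engineered the one inequality everything hinges on, namely $\sum_{n\ge 2}|a_n|\le (e-1)|a_2|$. But that inequality is precisely the step you do not prove: you state it as a ``target,'' sketch a plan based on the monotone behaviour of the ratios $t_n=|a_{n+1}/a_n|$, and then your own structural remark --- that $t_n\to 1$, so no geometric comparison with a fixed ratio below $1$ can close the argument, and convergence itself already needs $w>u+v$ --- shows that the plan as described cannot produce the bound, let alone with the exact constant $e-1$. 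As it stands this is a genuine gap, not a deferred computation.

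The missing ingredient is the Pog\'any--Srivastava inequality \eqref{fi} quoted in the paper's introduction: for a Fox--Wright function satisfying $\psi_1>\psi_2$ and $\psi_1^2<\psi_0\psi_2$ one has ${}_p\Psi_q[z]\le \psi_0+(e^{|z|}-1)\psi_1$. The paper rewrites $\sum_{n\ge1}|a_n|$ as $\frac{\Gamma(w)}{\Gamma(u)\Gamma(v)}\,{}_3\Psi_2[(u+1,1),(v+1,1),(1,1);(w+1,1),(2,1);1]$ and applies this bound at $z=1$; the two hypotheses $\psi_1>\psi_2$ and $\psi_1^2<\psi_0\psi_2$ for this particular ${}_3\Psi_2$ reduce (after cancelling Gamma ratios) to conditions $(\mathrm{i})$ and $(\mathrm{ii})$, and the resulting upper bound $\psi_0+(e-1)\psi_1$ is exactly $|a_1|+(e-1)|a_2|$ after multiplying by the prefactor. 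So conditions $(\mathrm{i})$ and $(\mathrm{ii})$ are not, as you guessed, a ratio-test device for the tail; they are verbatim the admissibility hypotheses of that cited inequality. Your argument would be complete if you replaced the tail-estimation paragraph with an appeal to \eqref{fi} (or supplied an independent proof of $\sum_{n\ge2}|a_n|\le(e-1)|a_2|$ under $(\mathrm{i})$--$(\mathrm{ii})$, which is essentially reproving that result).
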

\begin{proof}
To prove the above theorem, we will use the Lemma \ref{Subordination Lemma} which gives us a sufficient condition to guarantee that a function $F(u,v;w,z)$ is contained in the image set of exponential function under the domain of unit disk.
Consider the Gauss hypergeometric function $F(u,v;w,z)=\sum_{n=0}^{\infty}\frac{(u)_{n}(v)_{n}}{(w)_{n}}\frac{z^{n}}{n!}$ which satisfy

$$F(u,v;w,0)=1.$$
% By the property of gamma function and we have the conditions on $\beta$ such that the first condition for exponential condition is satisfied,
Now, it suffices to show that $$|F(u,v;w,z)-1| < 1-\frac{1}{e}.$$
we have,
\begin{align}\label{eqsub}
   \left|\sum_{n=1}^{\infty}\frac{(u)_{n}(v)_{n}}{(w)_{n}}\frac{z^{n}}{n!}\right|&<\left|\frac{\Gamma(w)}{\Gamma(u)\Gamma(v)}\sum_{n=1}^{\infty}\frac{\Gamma(u+n)\Gamma(v+n)}{\Gamma(w+n)n!}\right|\nonumber \\
   &=\left|\frac{\Gamma(w)}{\Gamma(u)\Gamma(v)}\sum_{n=0}^{\infty}\frac{\Gamma(u+1+n)\Gamma(v+1+n)\Gamma(n+1)}{\Gamma(w+1+n)\Gamma(n+2)n!}\right|\nonumber\\
   &=\left|\frac{\Gamma(w)}{\Gamma(u)\Gamma(v)}\right|\left|{}_3\Psi_2\left[
\begin{array}{rr}
(u+1,1),(v+1,1),(1,1);\\
\\
(w+1,1),(2,1);
\end{array}\;1\right]\right|.
\end{align}
 In this case, we have
$$\psi_0=\frac{\Gamma(u+1)\Gamma(v+1)}{\Gamma(w+1)},\;\;
\psi_1=\frac{\Gamma(u+2)\Gamma(v+2)}{2\Gamma(w+2)}\;\;\;\textrm{and}
\;\;\;\psi_2=\frac{\Gamma(u+3)\Gamma(v+3)}{3\Gamma(w+3)}.$$

It can be noted that the given hypothesis equivalent to $\psi_2<\psi_1$
and $\psi_1^2<\psi_0\psi_2.$ Therefore, by~(\ref{fi}),
we find that
\begin{equation}\label{MM2}
{}_3\Psi_2\left[
\begin{array}{rr}
(u+1,1),(v+1,1),(1,1);\\
\\
(w+1,1),(2,1);
\end{array}\;1\right]
\leq \frac{\Gamma(u+2)\Gamma(v+2)(e-1)}{2\Gamma(w+2)}+\frac{\Gamma(u+1)\Gamma(v+1)}{\Gamma(w+1)}.
\end{equation}
Using \eqref{eqsub}, we get
\begin{align*}
 & \left|\frac{\Gamma(w)}{\Gamma(u)\Gamma(v)}{}_3\Psi_2\left[
\begin{array}{rr}
(u+1,1),(v+1,1),(1,1);\\
\\
(w+1,1),(2,1);
\end{array}\;1\right]\right| \\
 &\leq  \left|\frac{\Gamma(w)}{\Gamma(u)\Gamma(v)}\right|\left|\frac{\Gamma(u+2)\Gamma(v+2)(e-1)}{2\Gamma(w+2)}+\frac{\Gamma(u+1)\Gamma(v+1)}{\Gamma(w+1)}\right|,\\
 & \implies |F(u,v;w,z)-1| < 1-\frac{1}{e},
\end{align*}
by using the condition ${\rm (H_1):} {\rm (iii)}$. Furthermore, with~the help of Lemma \ref{Subordination Lemma},
we get the required~result.
\end{proof}
Now, let us consider normalized Gauss hypergeometric function $\mathbb{F}(u,v;w;z)$ to prove the next result as:
$$\mathbb{F}(u,v;w;z)=\sum_{n=0}^{\infty}\frac{(u)_{n}(v)_{n}}{(w)_{n}}\frac{z^{n+1}}{n!}.$$
In the below theorem sufficient condition on parameter $u,v$ and $w$ have been established such that Gauss hypergeometric function $F(u,v;w;z)$ is exponential convex.

\begin{theorem}
Suppose that the parameters $u,v\in \mathbb{R}$, $w\neq 0,-1,-2,\cdots$ be constrained and u,v and w also hold the conditions:
\begin{displaymath}
{\rm (H_2):}\left\{ \begin{array}{ll}
{\rm (i)} & 3(u+1)(v+1)<2(w+1);\\
{\rm (ii)} & 4(w+1)uv<3(u+1)(v+1)w;\\
{\rm (iii)} & 4(u+2)(v+2)<2(w+2);\\
{\rm (iv)} & 8(w+2)(u+1)(v+1)<9(u+2)(v+2)(w+1);\\
{\rm (v)} &\left|\frac{uv}{w}+\frac{1}{e-1}+{\frac{
3u(u+1)v(v+1)(e-1)}{e(w)(w+1)}+\frac{2uv}
{we}}\right|< \frac{1}{e},
\end{array} \right.
\end{displaymath} then
$\mathbb{F}(u,v;w;z)\in \mathcal{K}_{e}$ w.r.t domain of unit disk.
\end{theorem}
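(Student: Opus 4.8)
The plan is to reduce exponential convexity to the same modulus estimate used in the previous theorem, via the Subordination Lemma, and then to run the Pog\'any--Srivastava machinery \emph{twice}. By definition $\mathbb{F}\in\mathcal{K}_{e}$ means $p(z):=1+\frac{z\mathbb{F}''(z)}{\mathbb{F}'(z)}\in\mathcal{P}_{e}$, i.e. $p\prec e^{z}$. Since $\mathbb{F}\in\mathcal{A}$ we have $\mathbb{F}'(0)=1$ and hence $p(0)=1$, so by Lemma \ref{Subordination Lemma} it suffices to prove
\[
\left|\frac{z\mathbb{F}''(z)}{\mathbb{F}'(z)}\right|<1-\frac{1}{e}\qquad(z\in\mathbb{D}).
\]
The essential new difficulty compared with the first theorem is that the quantity to be estimated is now a \emph{quotient}, so it cannot be bounded by a single series. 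First I would dominate it by
\[
\left|\frac{z\mathbb{F}''(z)}{\mathbb{F}'(z)}\right|\le\frac{|z\mathbb{F}''(z)|}{1-|\mathbb{F}'(z)-1|},
\]
which is legitimate once the denominator is controlled, i.e. once $|\mathbb{F}'(z)-1|<1$ throughout $\mathbb{D}$.

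Next I would expand both pieces as power series with nonnegative coefficients and pass to $z=1$ by the triangle inequality. Using $\mathbb{F}'(z)=\sum_{n\geq 0}\frac{(u)_{n}(v)_{n}}{(w)_{n}}\frac{(n+1)}{n!}z^{n}$ and $z\mathbb{F}''(z)=\sum_{n\geq 1}\frac{(u)_{n}(v)_{n}}{(w)_{n}}\frac{n(n+1)}{n!}z^{n}$, I would rewrite the weights $(n+1)$ and $n(n+1)$ through Gamma functions; this recasts the two majorant series as a Gamma prefactor times a Fox--Wright ${}_{3}\Psi_{2}$, namely with numerator--denominator data $(u,1),(v,1),(2,1)$ over $(w,1),(1,1)$ for $\mathbb{F}'(z)-1$, and $(u+1,1),(v+1,1),(3,1)$ over $(w+1,1),(2,1)$ for $z\mathbb{F}''(z)$. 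The numbers $2,3,4$ produced by $\Gamma(k+2),\Gamma(k+3),\Gamma(k+4)$ are exactly what make the hypotheses $(\mathrm{H}_{2})$ surface: conditions $(\mathrm{i})$ and $(\mathrm{ii})$ are the requirements $\psi_{1}>\psi_{2}$ and $\psi_{1}^{2}<\psi_{0}\psi_{2}$ for the first ${}_{3}\Psi_{2}$, while $(\mathrm{iii})$ and $(\mathrm{iv})$ are the same two requirements for the second. Under these, inequality~(\ref{fi}) applies and gives the upper bound $\psi_{0}+(e-1)\psi_{1}$ at $z=1$ for each Fox--Wright function.

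Evaluating these bounds yields the explicit estimates $|\mathbb{F}'(z)-1|\le \frac{2(e-1)uv}{w}=:B$ and $|z\mathbb{F}''(z)|\le \frac{2uv}{w}+\frac{3(e-1)uv(u+1)(v+1)}{w(w+1)}=:A$. Substituting $A$ and $B$ into the requirement $\frac{A}{1-B}<1-\frac{1}{e}$ and clearing the denominator produces a single scalar inequality in $u,v,w$; this fusion of the two Fox--Wright estimates is precisely the content encoded by $(\mathrm{H}_{2})(\mathrm{v})$. With that inequality in force, $|p(z)-1|<1-\frac{1}{e}$ on $\mathbb{D}$, and Lemma \ref{Subordination Lemma} delivers $p\in\mathcal{P}_{e}$, i.e. $\mathbb{F}\in\mathcal{K}_{e}$.

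The hard part will be the denominator. Bounding $|z\mathbb{F}''(z)|$ is a near repetition of the first theorem, but to turn the quotient estimate into something usable I must guarantee a strictly positive lower bound for $|\mathbb{F}'(z)|$ valid for \emph{every} $z\in\mathbb{D}$, which forces $B<1$ and is where conditions $(\mathrm{i})$--$(\mathrm{ii})$ really earn their keep. Two further points need care: the passage to $z=1$ through the triangle inequality is sharp only if the series coefficients are genuinely nonnegative, so I would check that $(\mathrm{H}_{2})$ enforces the sign restrictions on $u,v,w$ that legitimize it; and the algebra fusing $A$ and $B$ into $(\mathrm{v})$ must be arranged so that the stated condition is actually \emph{sufficient} (not merely necessary) for $\frac{A}{1-B}<1-\frac{1}{e}$.
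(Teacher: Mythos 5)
Your proposal follows essentially the same route as the paper's proof: reduce exponential convexity to the estimate $\left|z\mathbb{F}''(z)/\mathbb{F}'(z)\right|<1-\tfrac{1}{e}$ via Lemma \ref{Subordination Lemma}, bound $|z\mathbb{F}''(z)|$ from above and $|\mathbb{F}'(z)|$ from below by recasting each series as a Fox--Wright ${}_3\Psi_2$ at argument $1$ and invoking the Pog\'any--Srivastava inequality \eqref{fi} (with conditions (i)--(iv) playing the roles of $\psi_1>\psi_2$ and $\psi_1^2<\psi_0\psi_2$ for the two functions), and finally fuse the two bounds into $A/(1-B)<1-\tfrac{1}{e}$ using condition (v). The only deviations are cosmetic: you attach the two sets of Fox--Wright parameters to the opposite series (a different but equivalent index shift, which in fact yields exact identities where the paper uses loose inequalities), and you explicitly flag the positivity of the denominator and the nonnegativity of coefficients, which the paper leaves implicit.
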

\begin{proof}
To show the required result it is sufficient to show that
$$\left|\frac{z\mathbb{F}^{\prime\prime}(u,v;w;z)}{\mathbb{F}^{\prime}(u,v;w;z)}\right| < 1-\frac{1}{e}.$$
We  have,
\begin{align}\label{eqthm2.1}
|z\mathbb{F}^{\prime\prime}(u,v;w;z)|&=\left|\sum_{n=0}^{\infty}\frac{n(n+1)(u)_{n}(v)_{n}}{(w)_{n}}\frac{z^{n}}{n!}\right|\nonumber\\
 &<\left|\frac{\Gamma(w)}{\Gamma(u)\Gamma(v)}\sum_{n=0}^{\infty}\frac{\Gamma(u+n)\Gamma(v+n)\Gamma(n+2)}{\Gamma(w+n)\Gamma(n+1)n!}\right|\nonumber\\
   &=\left|\frac{\Gamma(w)}{\Gamma(u)\Gamma(v)}\right|\left|{}_3\Psi_2\left[
\begin{array}{rr}
(u,1),(v,1),(2,1);\\
\\
(w,1),(1,1);
\end{array}\;1\right]\right|.
\end{align}
 In this case, we have
$$\psi_0=\frac{\Gamma(u)\Gamma(v)}{\Gamma(w)},\;\;
\psi_1=\frac{2\Gamma(u+1)\Gamma(v+1)}{\Gamma(w+1)}\;\;\;\textrm{and}
\;\;\;\psi_2=\frac{3\Gamma(u+2)\Gamma(v+2)}{\Gamma(w+2)}.$$

It can be noted that the given hypothesis equivalent to $\psi_2<\psi_1$
and $\psi_1^2<\psi_0\psi_2.$ Therefore, by~(\ref{fi}),
we find that
\begin{equation}\label{MM2}
{}_3\Psi_2\left[
\begin{array}{rr}
(u,1),(v,1),(2,1);\\
\\
(w,1),(1,1);
\end{array}\;1\right]
\leq \frac{\Gamma(u+1)\Gamma(v+1)(e-1)}{2\Gamma(w+1)}+\frac{\Gamma(u)\Gamma(v)}{\Gamma(w)}.
\end{equation}
Now, we compute with~the help of the inequality:
$|z_1+z_2|\geq\big||z_1|-|z_2|\big|,$ 
\begin{align}\label{eqthm2.2}
    |\mathbb{F}^{\prime}(z)|&=\left|\sum_{n=0}^{\infty}\frac{(n+1)(u)_{n}(v)_{n}}{(w)_{n}}\frac{z^{n}}{n!}\right|\nonumber\\
    &> 1-\left|\sum_{n=0}^{\infty}\frac{(n+2)(u)_{n+1}(v)_{n+1}}{(w)_{n+1}}\frac{1}{(n+1)!}\right|\nonumber\\
    &> 1-\left|\frac{\Gamma(w)}{\Gamma(u)\Gamma(v)}\sum_{n=0}^{\infty}\frac{\Gamma(n+3)\Gamma(u+n+1)\Gamma(v+n+1)}{\Gamma(w+n+1)\Gamma(n+2)}\frac{1}{n!}\right|\nonumber\\
    &=1-\left|\frac{\Gamma(w)}{\Gamma(u)\Gamma(v)}\right|\left|{}_3\Psi_2\left[
\begin{array}{rr}
(u+1,1),(v+1,1),(3,1);\\
\\
(w+1,1),(2,1);
\end{array}\;1\right]\right|
\end{align}
 From above case, we get
$${\psi}^{'}_0=\frac{2\Gamma(u+1)\Gamma(v+1)}{\Gamma(w+1)},\;\;
{\psi}^{'}_1=\frac{3\Gamma(u+2)\Gamma(v+2)}{\Gamma(w+2)}\;\;\;\textrm{and}
\;\;\;{\psi}^{'}_2=\frac{4\Gamma(u+3)\Gamma(v+3)}{\Gamma(w+3)}.$$
It can be notify that the given hypothesis equivalent to ${\psi}^{'}_2<{\psi}^{'}_1$
and ${{\psi}^{'}_1}^{2}<{\psi}^{'}_0{\psi}^{'}_2.$ Therefore, by~(\ref{fi}),
we find that
\begin{equation}\label{MM2.1}
{}_3\Psi_2\left[
\begin{array}{rr}
(u+1,1),(v+1,1),(3,1);\\
\\
(w+1,1),(2,1);
\end{array}\;1\right]
\leq \frac{3\Gamma(u+2)\Gamma(v+2)(e-1)}{\Gamma(w+2)}+\frac{2\Gamma(u+1)\Gamma(v+1)}{\Gamma(w+1)}.
\end{equation}
Now equations \eqref{eqthm2.1}, \eqref{MM2}, \eqref{eqthm2.2} and \eqref{MM2.1} have been taken into account for proceeding the proof as:
\begin{align*}
    \left|\frac{z\mathbb{F}^{\prime\prime}(u,v;w;z)}{\mathbb{F}^{\prime}(u,v;w;z)}\right|&< \frac{\left|\frac{\Gamma(w)}{\Gamma(u)\Gamma(v)}\left(\frac{\Gamma(u+1)\Gamma(v+1)(e-1)}{2\Gamma(w+1)}+\frac{\Gamma(u)\Gamma(v)}{\Gamma(w)}\right)\right|}{1-\left|\frac{\Gamma(w)}{\Gamma(u)\Gamma(v)}\left(\frac{3\Gamma(u+2)\Gamma(v+2)(e-1)}{\Gamma(w+2)}+\frac{2\Gamma(u+1)\Gamma(v+1)}{\Gamma(w+1)}\right)\right|}
\end{align*}
with the help of condition ${\rm (H_2):} {\rm (v)}$ and  Furthermore, Lemma \ref{Subordination Lemma} taking into account,
we get the desired~result.
\end{proof}

% \begin{theorem}\label{thm 1}
% Suppose that the parameters $u,v\in \mathbb{R}$  and $w\neq 0,-1,-2,-3,\cdots$ be constrained and u,v;w also hold the condition :
% \begin{align}\label{eq5}
% \Re\left(\frac{w-2}{e}\right)-|uv|-|u+v||e|-|1-w|> 0,
% \end{align}
% then the function $\Delta(u,v;w;z)\in \mathcal{K}_{e}$.
% \end{theorem}

Applying the duality theorem between two classes $\mathcal{S}_{e}$ and $\mathcal{K}_{e}$, which state as : $F\in \mathcal{K}_{e}$ if and only if $zF^{\prime}\in \mathcal{S}_{e}$. we have the property $wF^{\prime}(u,v;w;z)=uvF(u+1,v+1;w+1;z)$, sufficient conditions for $zF(u,v;w;z)$ is exponential starlike can be obtained.
\begin{corollary}
Let the parameters $u,v\in \mathbb{R}$ and $w\neq 0,-1,-2,-3,\cdots$  be constrained and u,v and w satisfy the condition of Theorem $(2)$ as $u=u-1,v=v-1.w=w-1$, then the function $zF(u,v;w;z)\in \mathcal{S}_{e}$ or $zF(u,v;w;z)$
is exponential starlike function.
\end{corollary}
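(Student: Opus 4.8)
The plan is to derive the exponential starlikeness of $zF(u,v;w;z)$ from the exponential convexity already obtained in Theorem 2, by combining the Alexander-type duality $G\in\mathcal{K}_e \iff zG'\in\mathcal{S}_e$ with the differentiation formula for the Gauss hypergeometric function. Since the hypotheses are precisely $(H_2)$ with $u,v,w$ replaced by $u-1,v-1,w-1$, the first step is to apply Theorem 2 to these shifted parameters, which yields
$$\mathbb{F}(u-1,v-1;w-1;z)=zF(u-1,v-1;w-1;z)\in\mathcal{K}_e.$$

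Next I would invoke the duality theorem in the stated form: because the shifted normalized function lies in $\mathcal{K}_e$, its Alexander transform $z\,\dfrac{d}{dz}\big[zF(u-1,v-1;w-1;z)\big]$ lies in $\mathcal{S}_e$. The role of the recurrence $wF'(u,v;w;z)=uvF(u+1,v+1;w+1;z)$ — read with the index shifted down by one, i.e.\ $(w-1)F'(u-1,v-1;w-1;z)=(u-1)(v-1)F(u,v;w;z)$ — is to convert the derivative appearing here into $F(u,v;w;z)$ itself. Because $\mathcal{S}_e$ is defined through the scale-invariant quantity $zG'/G$, the positive constant $\frac{(u-1)(v-1)}{w-1}$ produced by the recurrence does not affect membership, so the target $zF(u,v;w;z)$ should emerge (up to this harmless scalar) as the starlike function, and I would conclude $zF(u,v;w;z)\in\mathcal{S}_e$.

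The main obstacle is the bookkeeping in the middle step. Differentiating $zF(u-1,v-1;w-1;z)$ gives $F(u-1,v-1;w-1;z)+zF'(u-1,v-1;w-1;z)$, so the recurrence rewrites only the second summand; one must then verify that the surviving term $F(u-1,v-1;w-1;z)$ together with the normalizing constants reassembles, coefficient by coefficient, into a scalar multiple of $zF(u,v;w;z)$ rather than a genuine ${}_3F_2$-type combination. Equivalently, the delicate point is to pin down the convex preimage $G$ with $zG'(z)=zF(u,v;w;z)$ and to identify it, up to normalization, with the function supplied by Theorem 2; it is here that the restriction $w\neq 0,-1,-2,-3,\dots$ (ensuring $w-1\neq 0,-1,-2,\dots$ and the nonvanishing of $(u-1)(v-1)$) is needed so that all contiguous constants are well defined.
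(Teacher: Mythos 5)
Your route is the same one the paper sketches: apply Theorem 2 to the shifted parameters $u-1,v-1,w-1$, invoke the Alexander-type duality $G\in\mathcal{K}_e\iff zG'\in\mathcal{S}_e$, and use the contiguous relation $wF'(u,v;w;z)=uvF(u+1,v+1;w+1;z)$ to identify the resulting starlike function with $zF(u,v;w;z)$. The paper gives no more detail than this remark, so in that sense you have reproduced its argument. However, the verification you defer as ``the main obstacle'' is not mere bookkeeping: it fails. Writing $G(z)=zF(u-1,v-1;w-1;z)$, one has
\[
zG'(z)=\sum_{n\ge 0}(n+1)\,\frac{(u-1)_n(v-1)_n}{(w-1)_n}\,\frac{z^{n+1}}{n!},
\qquad
zF(u,v;w;z)=\sum_{n\ge 0}\frac{(u)_n(v)_n}{(w)_n}\,\frac{z^{n+1}}{n!},
\]
and the ratio of the coefficients of $z^{n+1}$ is $\frac{(w-1)(u-1+n)(v-1+n)}{(u-1)(v-1)(w-1+n)(n+1)}$, which depends on $n$. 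So $zG'$ is a genuine ${}_3F_2$-type combination and not a scalar multiple of $zF(u,v;w;z)$; the ``harmless scalar'' you hope will absorb the discrepancy does not exist.

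The correct Alexander preimage of $zF(u,v;w;z)$ is
\[
\int_0^z F(u,v;w;t)\,dt=\frac{w-1}{(u-1)(v-1)}\bigl(F(u-1,v-1;w-1;z)-1\bigr)=z\,{}_3F_2(u,v,1;w,2;z),
\]
which is not the function $\mathbb{F}(u-1,v-1;w-1;z)$ covered by Theorem 2: exponential convexity of this primitive is governed by $1+zF''/F'$ (at the shifted parameters), whereas Theorem 2 controls $1+z\mathbb{F}''/\mathbb{F}'$ with $\mathbb{F}=zF$, a different quantity. To complete the corollary one would need either a convexity statement for the normalized primitive $F(a,b;c;z)-1$ or a direct estimate of $z(zF)'/(zF)$; the duality-plus-recurrence route as you propose it does not close. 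To be fair, the same defect is present in the paper's own two-line justification of the corollary; your proposal is faithful to that argument but inherits its gap, and you have at least located exactly where the gap sits.
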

In the next section, we have obtained sufficient conditions such that  Gauss hypergeometric function $\phi(u,v;w;z)$ belong to Janowski convexity. Further we will show that $z\phi(u,v;w;z)$ belong to Janowski starlikeness $S^{*}[C,D]$.

\section{Janowski starlikeness and convexity of Gauss hypergeometric function}\label{sec3}
Sufficient conditions has been entertained in this section  using differential subordination so that Gauss hypergeometric function satisfy Janowski starlikeness and convexity.
\begin{theorem}\label{thm3}
    Let parameters $u,v$ and $w\neq 0,-1,-2,\cdots$ be constrained such that it satisfy the conditions  for $-1\leq D<C\leq 1$,
    \begin{align}\label{eq1.3}
    &1+C-D+w(1+D)\nonumber\\
    &-\left|1+C-D+(c+d+2)(1+D)+\frac{(u+v+uv+1)(D+1)^{2}}{C-D}\right|>0,
    \end{align}
    further
    \begin{align}\label{eq1.4}
        &\left(2+\frac{w(1+D)}{2}+\frac{w(D-1)}{2}-\left((D-C)-(u+v+2)D+\frac{(u+v+uv+1)(1-D^{2})}{D-C}\right)^{2}\right)\nonumber\\&2\left((D-C)-(u+v+2)D-\frac{(u+v+uv+1)(1-D^{2})}{C-D}\right)^{2}+\nonumber\\&(\frac{(u+v+uv+1)}{2(C-D)}(1-D)^{2}-1-\frac{(u+v+2)(1+D)}{2}-\frac{(u+v+uv+1)(D+1)^{2}}{2(D-C)}+\nonumber\\
&\left(-1
-\frac{[(u+v+2)]}{2}(D-1)\right))^{2}\geq 0,
    \end{align}
    whenever,
    \begin{align}\label{eq1.5}
        &\left(\frac{(u+v+uv+1)}{2(C-D)}(1-D)^{2}-1-\frac{(u+v+2)(1+D)}{2}-\frac{(u+v+uv+1)(D+1)^{2}}{2(C-D)}\right)\nonumber\\
&+\left(-1
-\frac{[(u+v+2)]}{2}(D-1)\right)\nonumber\\
&\geq\left((D-C)-(u+v+2)D-\frac{(u+v+uv+1)(1-D^{2})}{C-D}\right)^{2}.
\end{align}
another inequality
\begin{align}\label{eq1.6}
&2\left(\left((D-C)-(u+v+2)D-\frac{(u+v+uv+1)(1-D^{2})}{C-D}\right)^{2}\right)+\nonumber\\
   & \left(\frac{(u+v+uv+1)}{2(D-C)}(1-D)^{2}-1-\frac{(u+v+2)(1+D)}{2}-\frac{(u+v+uv+1)(D+1)^{2}}{2(C-D)}\right)\nonumber\\
&+\left(-1
-\frac{[(u+v+2)]}{2}(D-1)\right)\geq 0,
\end{align}
whenever
\begin{align}\label{eq1.7}
        &\left(\frac{(u+v+uv+1)}{2(C-D)}(1-D)^{2}-1-\frac{(u+v+2)(1+D)}{2}-\frac{(u+v+uv+1)(D+1)^{2}}{2(D-C)}\right)\nonumber\\
&+\left(-1
-\frac{[(u+v+2)]}{2}(D-1)\right)\nonumber\\
&\leq\left((D-C)-(u+v+2)D-\frac{(u+v+uv+1)(1-D^{2})}{C-D}\right)^{2}.\end{align}
If $0\notin \phi^{\prime}(\mathbb(D))$ and $0\notin \phi^{\prime\prime}(\mathbb(D))$, then $$1+z\frac{\phi^{\prime\prime}(z)}{\phi^{\prime}(z)} \prec \frac{1+Cz}{1-Dz}.$$
    \end{theorem}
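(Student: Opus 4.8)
The plan is to recast the Janowski subordination as a positivity statement for an auxiliary function and then invoke the Miller--Mocanu admissibility criterion of Lemma \ref{lem2}. Write $p(z)=1+z\phi''(z)/\phi'(z)$, which is analytic in $\mathbb{D}$ (the hypotheses $0\notin\phi'(\mathbb{D})$ and $0\notin\phi''(\mathbb{D})$ guarantee this) and satisfies $p(0)=1$. Since $h(z)=\frac{1+Cz}{1-Dz}$ is univalent on $\mathbb{D}$, the subordination $p\prec h$ is equivalent to the existence of an analytic self-map $w$ of $\mathbb{D}$ with $w(0)=0$ and $p=h\circ w$; I would encode this through the M\"obius substitution
\begin{equation*}
p(z)=\frac{1+C\,\dfrac{1-q(z)}{1+q(z)}}{1-D\,\dfrac{1-q(z)}{1+q(z)}},
\end{equation*}
so that $q(0)=1$ and $p\prec h$ holds precisely when $\Re q(z)>0$ throughout $\mathbb{D}$. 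Thus it suffices to prove $\Re q>0$.

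Next I would bring in the defining differential equation $z(1-z)F''+[w-(u+v+1)z]F'-uvF=0$ for $F(u,v;w;z)$. Since $\phi(z)=zF(u,v;w;z)$, the quantities $\phi'$ and $\phi''$ are linear combinations of $F,F',F''$, and eliminating $F''$ by means of the equation expresses $p-1=z\phi''/\phi'$, and hence $p$ itself, through the logarithmic data of $\phi$. Substituting the M\"obius relation above and differentiating once more, I would arrive at an identity of the form $\psi\big(q(z),zq'(z),z^2q''(z);z\big)\in\Omega$, where $\Omega$ is the image region attached to $h$ and $\psi:\mathbb{C}^3\times\mathbb{D}\to\mathbb{C}$ is the explicit admissible function forced by the equation. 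The coefficients $(u+1)(v+1)=u+v+uv+1$, $(u+1)+(v+1)=u+v+2$ and $w$ that appear throughout \eqref{eq1.3}--\eqref{eq1.7} are exactly those produced at this step.

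With $\psi$ and $\Omega$ in hand, the proof reduces to verifying the admissibility condition of Lemma \ref{lem2}: that $\psi(i\rho,\sigma,\mu+i\nu;z)\notin\Omega$ for all real $\rho$, all $\sigma\le-\tfrac{1+\rho^2}{2}$ and all $\mu$ with $\sigma+\mu\le0$. Setting $q=i\rho$ and extracting the relevant real part collapses the membership test to a single real inequality; because $\sigma$ enters linearly and is bounded above by $-\tfrac{1+\rho^2}{2}$, one replaces $\sigma$ by this extremal value and is left with a quadratic polynomial in $\rho$. Condition \eqref{eq1.3} secures this expression at the governing endpoint, while \eqref{eq1.4}--\eqref{eq1.7} are precisely the vertex and discriminant conditions that force the quadratic to keep the correct sign: the pair \eqref{eq1.5} and \eqref{eq1.7} distinguishes whether the minimising $\rho$ lies in the regime where the linear contribution dominates, and \eqref{eq1.4} and \eqref{eq1.6} then guarantee nonnegativity in each of the two regimes.

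Once the admissibility inequality is established in both cases, Lemma \ref{lem2} yields $\Re q(z)>0$ in $\mathbb{D}$, and unwinding the M\"obius substitution gives $p\prec\frac{1+Cz}{1-Dz}$, which is the assertion. The main obstacle is the third step: carrying out the real-part computation cleanly and, above all, organising the quadratic-in-$\rho$ analysis into the two cases demanded by \eqref{eq1.5} and \eqref{eq1.7} so that the sign conditions \eqref{eq1.4} and \eqref{eq1.6} can be read off. The algebra relating $\phi',\phi''$ to $F,F',F''$ through the differential equation, together with threading the Janowski parameters $C,D$ correctly through the change of variables, is where the computation is most delicate.
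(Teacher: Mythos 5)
Your proposal follows essentially the same route as the paper: the M\"obius substitution reducing the Janowski subordination to $\Re q>0$, the differentiated hypergeometric ODE yielding an identity $\psi(q(z),zq'(z);z)=0$ (so $\Omega=\{0\}$), the Miller--Mocanu admissibility test of Lemma \ref{lem2}, and the quadratic case analysis governed by \eqref{eq1.3}--\eqref{eq1.7}. The only cosmetic differences are that the paper uses the two-variable form of Lemma \ref{lem2} rather than the three-variable one, and that after maximizing the quadratic in $\rho$ it performs the two-case analysis \eqref{eq1.5}/\eqref{eq1.7} on a second quadratic $H(x)$ in $x=\Re z$ rather than directly on the $\rho$-quadratic.
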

\begin{proof}
    We define a function
$q: \mathcal{D}\rightarrow \mathbb{C}$
$$q(z)=\frac{(C-D)\phi^{\prime}(z)+(1-D)z\phi^{\prime\prime}(z)}{(C-D)\phi^{\prime}(z)-(1+D)z\phi^{\prime\prime}(z)},$$
Then $$\frac{z\phi^{\prime\prime}(z)}{\phi^{\prime}(z)}=\frac{(C-D)(q(z)-1)}{(q(z)+1)+D(q(z)-1)}.$$
Taking logarithmic derivative both sides,
\begin{align}\label{eq1.1}
&\frac{1}{z}+\frac{\phi^{\prime\prime\prime}(z)}{\phi^{\prime\prime}(z)}-\frac{\phi^{\prime\prime}(z)}{\phi^{\prime}(z)}=\frac{q^{\prime}(z)}{q(z)-1}-\frac{q^{\prime}(z)(D+1)}{(q(z)+1)+D(q(z)-1)}\nonumber\\
& = 1+\frac{z\phi^{\prime\prime\prime}(z)}{\phi^{\prime\prime}(z)}-\frac{z\phi^{\prime\prime}(z)}{\phi^{\prime}(z)}=\frac{zq^{\prime}(z)}{q(z)-1}-\frac{zq^{\prime}(z)(D+1)}{(q(z)+1)+D(q(z)-1)}.
\end{align}
From \eqref{eq1.1}, we get
\begin{align*}
& \frac{z\phi^{\prime\prime\prime}(z)}{\phi^{\prime\prime}(z)}=\frac{2zq^{\prime}(z)}{(q(z)-1)((q(z)+1)+D(q(z)-1))}-1+\frac{z\phi^{\prime\prime}(z)}{\phi^{\prime}(z)} \;\mbox{or}, \\
& \frac{z\phi^{\prime\prime\prime}(z)}{\phi^{\prime\prime}(z)}\frac{z\phi^{\prime\prime}(z)}{\phi^{\prime}(z)}=\frac{2(C-D)(q(z)-1)zq^{\prime}(z)}{(q(z)-1)((q(z)+1)+D(q(z)-1))^{2}}-\frac{(C-D)(q(z)-1)}{(q(z)+1)+D(q(z)-1)}\\
&+\frac{(C-D)^{2}(q(z)-1)^{2}}{((q(z)+1)+D(q(z)-1))^{2}}.
\end{align*}
Differentiating both sides the differential equation $z(1-z)\phi^{\prime\prime}(u,v;w;z)+[w-(u+v+1)z]\phi^{\prime}(u,v;w;z)-uv\phi(u,v;w;z)=0$ and followed by dividing by $\phi^{\prime}(z)$ and multiplying $z$, we have
\begin{align}\label{eq1.2}
&(1-2z)\phi^{\prime\prime}(z)+z(1-z)\phi^{\prime\prime\prime}(z)+[w-(u+v+1)z]\phi^{\prime\prime}(z)-(u+v+1)\phi^{\prime}(z)-uv\phi^{\prime}(z)\nonumber\\
&=z(1-z)\frac{z\phi^{\prime\prime\prime}(z)}{\phi^{\prime}(z)}+[w+1-(u+v+3)z]\frac{z\phi^{\prime\prime}(z)}{\phi^{\prime}(z)}-(u+v+uv+1)z\nonumber\\
&=(1-z)\frac{z\phi^{\prime\prime\prime}(z)}{\phi^{\prime\prime}(z)}\frac{z\phi^{\prime\prime}(z)}{{\phi^{\prime}}(z)}+[w+1-(u+v+3)z]\frac{z\phi^{\prime\prime}(z)}{\phi^{\prime}(z)}-(u+v+uv+1)z\nonumber\\
&=(1-z)(\frac{2(C-D)(q(z)-1)zq^{\prime}(z)}{(q(z)-1)((q(z)+1)+B(q(z)-1))^{2}}-\frac{(D-C)(q(z)-1)}{(q(z)+1)+D(q(z)-1)}\nonumber\\
&+\frac{(C-D)^{2}(q(z)-1)^{2}}{((q(z)+1)+D(q(z)-1)^{2}})+[1+w-(w+u+3)z]\frac{(C-D)(q(z)-1)}{(q(z)+1)+D(q(z)-1)}\nonumber\\
&-(u+v+uv+1)z\nonumber\\
&= \frac{2(C-D)(1-z)zq^{\prime}(z)}{(q(z)+1)+D(q(z)-1))^{2}}+\frac{(C-D)^{2}(1-z)(q(z)-1)^{2}}{(q(z)+1)+D(Q(z)-1))^{2}}-(u+v+uv+1)z\nonumber\\
&+[w-(u+v+2)z]\frac{(C-D)(q(z)-1)}{(q(z)+1)+D(q(z)-1)}\nonumber\\
&=(1-z)zq^{\prime}(z)+\frac{(C-D)}{2}(1-z)(q(z)^{2}+1-2q(z))+\frac{w-(u+v+2)z}{2}((q(z))^{2}-1\nonumber\\
&+D((q(z))^{2}+1-2q(z))-\frac{(u+v+uv+1)z}{2(C-D)}(q(z)^{2}(D+1)^{2}+(1-D)^{2}+2q(z)(1-D^{2}))\nonumber\\
&=(1-z)zq^{\prime}(z)\nonumber\\
&+\left(\frac{C-D}{2}(1-z)+\frac{[w-(u+v+2)z]}{2}(1+D)-\frac{(u+v+uv+1)}{2(C-D)}z(1+D)^{2}\right)q(z)^{2}\nonumber\\
&+\left[(C-D)(z-1)-[w-(u+v+2)z]D-\frac{u+v+uv+1}{C-D}(1-D^{2})z\right]q(z)\nonumber\\
&+\frac{C-D}{2}(1-z)+\frac{[w-(u+v+2)z]}{2}(D-1)-\frac{(u+v+uv+1)}{2(C-D)}z(1-D)^{2}=0.
\end{align}
Let us consider a function $\psi(q(z),zq^{\prime}(z);z)=F_{1}zq^{\prime}(z)+F_{2}(q(z))^{2}+F_{3}q(z)+F_{4}$, where
$F_{1}=1-z$, 
$$F_{2}=\left(\frac{C-D}{2}(1-z)+\frac{[w-(u+v+2)z]}{2}(1+D)-\frac{(u+v+uv+1)}{2(C-D)}z(1+D)^{2}\right),$$
$$F_{3}=\left[(C-D)(z-1)-[w-(u+v+2)z]D-\frac{u+v+uv+1}{C-D}(1-D^{2})z\right],$$
$$F_{4}=\frac{C-D}{2}(1-z)
+\frac{[w-(u+v+2)z]}{2}(D-1)-\frac{(u+v+uv+1)}{2(C-D)}z(1-D)^{2}.$$
With the help of \eqref{eq1.2}, $\Omega=0$ imply that $\psi(q(z),zq^{\prime}(z);z)\in \Omega$. Assume that
$G_{1}=\Re(F_{1})=1-x$, $G_{2}=$\\$$\Re(F_{2})=\frac{C-D}{2}(1-x)+\frac{w(1+D)}{2}-\frac{(u+v+2)x(1+D)}{2}-\frac{(u+v+uv+1)x(D+1)^{2}}{2(C-D)},$$
$$G_{3}=\Re(iF_{3})=(C-D)(-y)+(u+v+2)(-y)D+\frac{(u+v+uv+1)(1-D^{2})}{C-D}y,$$
$$G_{4}=\frac{C-D}{2}(1-x)
+\frac{[w-(u+v+2)x]}{2}(D-1)-\frac{(u+v+uv+1)}{2(C-D)}x(1-D)^{2}.$$
For $\sigma\leq -\frac{(1+\rho^{2})}{2}$, $\rho\in \mathbb{R}$.
\begin{align*}
&\Re\psi(i\rho,\sigma; z)=G_{1}\sigma+G_{2}(i\rho)^{2}+G_{3}\rho+G_{4}\\
& = G_{1}\sigma-G_{2}(\rho)^{2}+G_{3}\rho+G_{4}\\
&\leq G_{1}\left(\frac{-(1+\rho^{2})}{2}\right)-G_{2}\rho^{2}+G_{3}\rho+G_{4}\\
&\leq \frac{-G_{1}}{2}-\frac{G_{1}\rho^{2}}{2}-G_{2}\rho^{2}+G_{3}\rho+G_{4}\\
&=\frac{-G_{1}-G_{1}\rho^{2}-2G_{2}\rho^{2}+2G_{3}\rho+2G{4}}{2}\\
&=\frac{-1}{2}[(G_{1}+2G_{2})\rho^{2}-2G_{3}\rho+G_{1}-2G_{4}]=Q(\rho).
\end{align*}
By given hypothesis \eqref{eq1.3} $(G_{1}+2G_{2})>0$, $Q(\rho)$ has maximum value at $\rho=\frac{G_{3}}{G_{1}+2G_{2}}$. Now finding value of $Q(\rho)$ at $\rho=\frac{G_{3}}{G_{1}+2G_{2}}$ , for all $ \rho, |x|,|y|<1$, we have
\begin{align*}
&Q(\rho)=\frac{-1}{2}[(G_{1}+2G_{2})\left(\frac{G_{3}}{G_{1}+2G_{2}}\right)^{2}-2G_{3}\left(\frac{G_{3}}{G_{1}+2G{2}}\right)-2G_{4}+G_{1}]<0\\
&=\frac{-1}{2}\left[\frac{-(G_{3})^{2}}{G_{1}+2G_{2}}-2G_{4}+G_{1}\right]\leq 0\\
&=\frac{G_{3}^{2}}{G_{1}+2G_{2}}\leq G_{1}-2G_{4}\\
&=G_{3}^{2}\leq (G_{1}+2G_{2})(G_{1}-2G_{4}).
\end{align*}
For $|x|<1,|y|<1$ and $y^{2}<1-x^{2}$ for above inequality, we have to prove

\begin{align*}
&((C-D)(-y)+(u+v+2)(-y)D+\frac{(u+v+uv+1)(1-D^{2})}{C-D}y)^{2} \\
&\leq (1-x+2(\frac{C-D}{2}(1-x)+\frac{w(1+D)}{2}-\frac{(u+v+2)x(1+D)}{2}\\
&-\frac{(u+v+uv+1)x(D+1)^{2}}{2(C-D)})(1-x-2(\frac{C-D}{2}(1-x)\\
&+\frac{[w-(u+v+2)x]}{2}(D-1)-\frac{(u+v+uv+1)}{2(C-D)}x(1-D)^{2})\\
\implies &((D-C)-(u+v+2)D+\frac{(u+v+uv+1)(1-D^{2})}{C-D})^{2}(1-x^{2}) \\
&\leq (1-x+2(\frac{C-D}{2}(1-x)+\frac{w(1+D)}{2}-\frac{(u+v+2)x(1+D)}{2}\\
&-\frac{(u+v+uv+1)x(D+1)^{2}}{2(C-D)}))(1-x-2(\frac{C-D}{2}(1-x)\\
&+\frac{[w-(u+v+2)x]}{2}(D-1)-\frac{(u+v+uv+1)}{2(C-D)}x(1-D)^{2})).
\end{align*}
The above inequality yields\\
$H(x)=h_{1}(C,D,O)x^{2}+h_{2}(C,D,O)x+h_{3}(C,D,O)\geq 0$, where
\begin{align*}
    &h_{1}(C,D,O)=\left((D-C)-(u+v+2)D-\frac{(u+v+uv+1)(1-D^{2})}{D-C}\right)^{2},\\
    &h_{2}(C,D,O)\\
    &=\left(\frac{(u+v+uv+1)}{2(C-D)}(1-D)^{2}-1-\frac{(u+v+2)(1+D)}{2}-\frac{(u+v+uv+1)(D+1)^{2}}{2(C-D)}\right)\\
&+\left(-1
-\frac{[(u+v+2)]}{2}(D-1)\right),\\
    &h_{3}(C,D,O)=2+\frac{w(1+D)}{2}+\frac{w(D-1)}{2}\nonumber\\
    &-\left((D-C)-(u+v+2)D+\frac{(u+v+uv+1)(1-D^{2})}{C-D}\right)^{2}.
\end{align*}
To obtain \eqref{eq1.3}, in first case \eqref{eq1.5} holds. Then $|\frac{-h_{2}}{2h_{1}}|=|\rho|\leq 1$ and therefore $H^{\prime}(x)=0$ at $x=\rho\in (-1,1)$. Since $H^{\prime\prime}(x)>0$, then minimum value at $x=\frac{-h_{2}}{2h_{1}}$ and from \eqref{eq1.4} it satisfies that
$H(x)\geq H(\rho)=h_{3}-\frac{{h_{2}}^{2}}{2h_{1}}\geq 0$.
Now, let \eqref{eq1.7} be hold. In this case $|\rho|\geq 1$ and $H^{\prime}(x)=2h_{1}(x)+h_{2}\leq 2h_{1}+h_{2}\leq 0$ equivalent to \eqref{eq1.6}, $H(x)$
is decreasing. Hence $H(x)\geq H(1)=h_{1}+h_{2}+h_{3}\geq 0$.
By help of Lemma \ref{lem2}, $\Re(p(z))>0$, it is equivalent to \\
$$q(z)=\frac{(C-D)\phi^{\prime}(z)+(1-D)z\phi^{\prime\prime}(z)}{(C-D)\phi^{\prime}(z)-(1+D)z\phi^{\prime\prime}(z)} \prec \frac{1+z}{1-z}.$$
Using definition of subordination, $\exists$ a self-map $l(z)$ such that $l(0)=0$ and
$$q(z)=\frac{(C-D)\phi^{\prime}(z)+(1-D)z\phi^{\prime\prime}(z)}{(C-D)\phi^{\prime}(z)-(1+D)z\phi^{\prime\prime}(z)} = \frac{1+l(z)}{1-l(z)}.$$
After simplifying above equation, we get
$$1+z\frac{\phi^{\prime\prime}(z)}{\phi^{\prime}(z)} = \frac{1+Cl(z)}{1-Dl(z)}$$
and therefore $$1+z\frac{\phi^{\prime\prime}(z)}{\phi^{\prime}(z)} \prec \frac{1+Cz}{1-Dz}.$$
\end{proof}
Let us consider the relation, we have
$$\frac{z(z\phi(u,v;w;z))^{\prime}}{z\phi(u,v;w;z)}=1+z\frac{\phi^{\prime\prime}(u-1,v-1;w-1;z)}{\phi^{\prime}(u-1,v-1;w-1;z)}.$$
Simultaneously with the Theorem \ref{thm3}, yields the following corollary as $z\phi(u,v;w;z)\in S^{*}[C,D].$

\begin{corollary}
    Assume that parameters $u,v$ and $w\neq 0,-1,-2,\cdots$ be constrained such that it satisfy the conditions  for $-1\leq D<C\leq 1$,
    \begin{align*}
    &1+C-D+(w-1)(1+D)\nonumber\\
    &-\left|1+C-D+(c+d+2)(1+D)+\frac{(u-1+v+(u-1)(v-1))(D+1)^{2}}{C-D}\right|>0,
    \end{align*}
    further
    \begin{align*}
        &\left(2+\frac{(w-1)(1+D)}{2}+\frac{(w-1)(D-1)}{2}-\left((D-C)-(u+v)D+\frac{(u+v+(u-1)(v-1)-1)(1-D^{2})}{D-C}\right)^{2}\right)\nonumber\\&2\left((D-C)-(u+v)D-\frac{(u-1+v+(u-1)(v-1))(1-D^{2})}{C-D}\right)^{2}+\nonumber\\&(\frac{(u-1+v+(u-1)(v-1))}{2(C-D)}(1-D)^{2}-1-\frac{(u+v)(1+D)}{2}-\frac{(u-1+v+(u-1)(v-1))(D+1)^{2}}{2(D-C)}+\nonumber\\
&\left(-1
-\frac{[(u+v)]}{2}(D-1)\right))^{2}\geq 0,
    \end{align*}
    whenever,
    \begin{align*}
        &\left(\frac{(u-1+v+(u-1)(v-1))}{2(C-D)}(1-D)^{2}-1-\frac{(u+v)(1+D)}{2}-\frac{(u-1+v+(u-1)(v-1))(D+1)^{2}}{2(C-D)}\right)\nonumber\\
&+\left(-1
-\frac{[(u+v)]}{2}(D-1)\right)\nonumber\\
&\geq\left((D-C)-(u+v)D-\frac{(u-1+v+(u-1)(v-1))(1-D^{2})}{C-D}\right)^{2}.
\end{align*}
another inequality
\begin{align*}
&2\left(\left((D-C)-(u+v)D-\frac{(u-1+v+(u-1)(v-1))(1-D^{2})}{C-D}\right)^{2}\right)+\nonumber\\
   & \left(\frac{(u-1+v+(u-1)(V-1))}{2(D-C)}(1-D)^{2}-1-\frac{(u+v)(1+D)}{2}-\frac{(u-1+v+(u-1)(v-1))(D+1)^{2}}{2(C-D)}\right)\nonumber\\
&+\left(-1
-\frac{[(u+v)]}{2}(D-1)\right)\geq 0,
\end{align*}
whenever
\begin{align*}
        &\left(\frac{(u-1+v+(u-1)(v-1))}{2(C-D)}(1-D)^{2}-1-\frac{(u+v)(1+D)}{2}-\frac{(u-1+v+(u-1)(V-1))(D+1)^{2}}{2(D-C)}\right)\nonumber\\
&+\left(-1
-\frac{[(u+v)]}{2}(D-1)\right)\nonumber\\
&\leq\left((D-C)-(u+v)D-\frac{(u-1+V+(U-1)(V-1))(1-D^{2})}{C-D}\right)^{2}.\end{align*}
If $0\notin \phi^{\prime}(\mathbb(D))$ and $0\notin \phi^{\prime\prime}(\mathbb(D))$, then $$\frac{z(z\phi(u,v;w;z))^{\prime}}{z\phi(u,v;w;z)} \prec \frac{1+Cz}{1-Dz}.$$
    \end{corollary}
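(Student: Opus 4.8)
The plan is to deduce this corollary directly from Theorem \ref{thm3} by a parameter shift, using the contiguous relation for the Gauss hypergeometric function recorded in the introduction. First I would observe that the hypotheses of the corollary are obtained verbatim from the hypotheses \eqref{eq1.3}--\eqref{eq1.7} of Theorem \ref{thm3} under the substitution $(u,v,w)\mapsto(u-1,v-1,w-1)$. Indeed, the quantity $u+v+uv+1$ becomes $(u-1)+(v-1)+(u-1)(v-1)+1=u+v-1+(u-1)(v-1)$, the quantity $u+v+2$ becomes $u+v$, and $w$ becomes $w-1$, which is precisely the form displayed in the statement. Thus no new conditions are imposed; the content is a transfer of Theorem \ref{thm3} to a shifted set of parameters.

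The key identity to justify is
$$\frac{z(z\phi(u,v;w;z))^{\prime}}{z\phi(u,v;w;z)}=1+z\frac{\phi^{\prime\prime}(u-1,v-1;w-1;z)}{\phi^{\prime}(u-1,v-1;w-1;z)}.$$
To establish it, I would set $\Phi(z)=\phi(u-1,v-1;w-1;z)$ and apply the recurrence $wF^{\prime}(u,v;w;z)=uvF(u+1,v+1;w+1;z)$ with parameters shifted down by one, which gives $(w-1)\Phi^{\prime}(z)=(u-1)(v-1)\phi(u,v;w;z)$. Hence $z\phi(u,v;w;z)$ is a nonzero constant multiple of $z\Phi^{\prime}(z)$, and since the constant cancels in the logarithmic derivative one computes
$$\frac{z(z\phi(u,v;w;z))^{\prime}}{z\phi(u,v;w;z)}=\frac{z(\Phi^{\prime}(z)+z\Phi^{\prime\prime}(z))}{z\Phi^{\prime}(z)}=1+\frac{z\Phi^{\prime\prime}(z)}{\Phi^{\prime}(z)},$$
which is exactly the asserted identity.

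With the identity in hand, I would apply Theorem \ref{thm3} to the function $\phi(u-1,v-1;w-1;z)$. The shifted hypotheses, which are precisely those assumed in the corollary, guarantee
$$1+z\frac{\phi^{\prime\prime}(u-1,v-1;w-1;z)}{\phi^{\prime}(u-1,v-1;w-1;z)}\prec\frac{1+Cz}{1-Dz}.$$
Combining this subordination with the identity above immediately yields
$$\frac{z(z\phi(u,v;w;z))^{\prime}}{z\phi(u,v;w;z)}\prec\frac{1+Cz}{1-Dz},$$
that is, $z\phi(u,v;w;z)\in S^{*}[C,D]$, as claimed.

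The main point requiring care is the bookkeeping rather than any genuine difficulty: one must confirm that the constant $\frac{(u-1)(v-1)}{w-1}$ relating $\Phi^{\prime}$ to $\phi$ is nonzero so that the division is legitimate, and that the nonvanishing hypotheses $0\notin\phi^{\prime}(\mathbb{D})$ and $0\notin\phi^{\prime\prime}(\mathbb{D})$ supply exactly the nonvanishing of $\Phi^{\prime}$ and $\Phi^{\prime\prime}$ needed to define the auxiliary function $q$ used in the proof of Theorem \ref{thm3}. Once these routine checks are made, there is no further obstacle, since all of the analytic work encoded in the inequalities \eqref{eq1.3}--\eqref{eq1.7} has already been carried out in Theorem \ref{thm3}.
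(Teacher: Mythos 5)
Your proposal is correct and follows essentially the same route as the paper: the paper likewise deduces the corollary from Theorem \ref{thm3} via the identity $\frac{z(z\phi(u,v;w;z))^{\prime}}{z\phi(u,v;w;z)}=1+z\frac{\phi^{\prime\prime}(u-1,v-1;w-1;z)}{\phi^{\prime}(u-1,v-1;w-1;z)}$ together with the parameter shift $(u,v,w)\mapsto(u-1,v-1,w-1)$. The only difference is that you actually derive this identity from the contiguous relation $wF^{\prime}(u,v;w;z)=uvF(u+1,v+1;w+1;z)$ and record the nonvanishing caveats, details the paper states without justification.
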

Which imply Gauss hypergeometric function hold janowski starlikeness, i.e $z\phi(u,v;w;z)\in S^{*}[C,D].$

\section{conclusion}
The Investigation successfully established new and significant conditions for the normalized Gauss hypergeometric function, determining its geometric properties concerning exponential starlikeness, convexity, Janowski convexity, and starlikeness based on specific parameter values. The implications of these results were highlighted through various consequences underscoring the importance and novelty of these established conditions. These contributions add valuable insights to the understanding of the Gauss hypergeometric function's behavior within the unit disk, enriching the field of mathematical analysis. The result obtained in this article can be applied in further studies connected to fractional calculusas it csn be seen in recently published paper related to confluent hypergeometric funcion \cite{lupas}, Gaussian hypergeometric function \cite{ors} and other generalied hypergeometric function. Quantum calculus aspects can also be involved with Gaussian hypergeometric function motivated by result which is available in \cite{ gour} Moreover one open problem can be based for this function as: after holding which criteria it satisfies lemniscate starlikeness and convexity using subordination. 

{}

\end{document}